\documentclass[11pt]{amsart}
\usepackage{graphicx}
\usepackage{amsmath}
\usepackage{amssymb}
\usepackage{color}
\usepackage{mathdots}
\usepackage{bbm}
\usepackage{amsfonts}
\usepackage{graphicx}
\usepackage{mathrsfs}
\usepackage{mathtools}

\vfuzz2pt 
\hfuzz2pt 
\newtheorem{thm}{Theorem}[section]
\newtheorem{cor}[thm]{Corollary}

\theoremstyle{definition}

\theoremstyle{remark}
\newtheorem{rem}[thm]{Remark}

\numberwithin{equation}{section}

\newcommand{\cI}{\mathcal{I}}

\newcommand{\cV}{\mathcal{V}}

\newcommand{\RR}{\mathbb{R}}

\newcommand{\NN}{\mathbb{N}}

\newcommand{\supp}{{\rm supp \,}}

\begin{document}
\title[The moment problem on curves with bumps]{The moment problem on curves with bumps} %

\subjclass[2010]{47A57 (14P99)} 
\author[DP Kimsey]{David P Kimsey}
\address{School of Mathematics, Statistics and Physics\\
Newcastle University\\
Newcastle upon Tyne NE1 7RU UK}
\email{david.kimsey@ncl.ac.uk}
\author[M Putinar]{Mihai Putinar}
\address{Department of Mathematics \\
University of California Santa Barbara \\
Santa Barbara, CA 93106-3080 USA and School of Mathematics, Statistics and Physics \\
Newcastle University\\
Newcastle upon Tyne NE1 7RU UK
}
\email{mihai.putinar@ncl.ac.uk}
\thanks{The second author would like to thank the Isaac Newton Institute for Mathematical Sciences for support and hospitality during the program Complex Analysis, Fall 2019, when work on this paper was undertaken. This work was supported by
EPSRC grant number EP/R014604/1. Both authors are thankful to Professor Daniel Plaumann and Professor Claus Scheiderer for pointing out item (c) in Remark \ref{rem:FINAL} and also to Professor Markus Schweighofer for carefully reading the paper and making constructive suggestions.}

\begin{abstract} The power moments of a positive measure on the real line or the circle are characterized by the non-negativity of an infinite matrix, Hankel, respectively Toeplitz, attached to the data.
Except some fortunate configurations, in higher dimensions there are no non-negativity criteria for the power moments of a measure to be supported by a prescribed closed set. We combine two well studied fortunate situations, specifically a class of curves in two dimensions classified by Scheiderer and Plaumann, and compact, basic semi-algebraic sets, with the aim at enlarging the realm of geometric shapes on which the power moment problem is accessible and solvable by non-negativity certificates.
\end{abstract}

\maketitle

\section{Introduction}
Throughout the present note $\RR[x_1, \ldots, x_d]$ denotes the ring of polynomials with real coefficients in $d$ indeterminates. We adopt the standard notation  
$$x^{\gamma} = \prod_{j=1}^d x_j^{\gamma_j} \quad {\rm and} \quad |x| := \sqrt{x_1^2 + \ldots + x_d^2 },$$  
where $x = (x_1, \ldots, x_d) \in \RR^d$ and $\gamma = (\gamma_1, \ldots, \gamma_d) \in \NN_0^d$. The convex cone of polynomials $p \in \RR[x_1, \ldots, x_d]$ which can written as a {\it sum of squares} is $\Sigma^2$. The elements of $\Sigma^2$ represent universally non-negative polynomials.
The real zero set of the ideal $\cI := (p_1, \ldots, p_k)$ generated by $p_1, \ldots, p_k$ in $\RR[x_1, \ldots, x_d]$ is 
$$\cV(\cI):= \{ x \in \RR^d: p_1(x) = \ldots = p_k(x) = 0 \}.$$

Recalling some basic notions of real algebraic geometry is also in order. Specifically, for a finite subset $R = \{ r_1, \ldots, r_k \} \subseteq \RR[x_1, \ldots, x_d]$, we let $Q_R$ stand for the {\it quadratic module} generated by $R$:
$$Q_R = \{ \sigma_0 + r_1 \, \sigma_1 + \ldots + r_k \, \sigma_k: \sigma_0, \ldots, \sigma_k \in \Sigma^2\}.$$ Also,
$$K_Q := \{ x \in \RR^d: r_j(x) \geq 0 \quad {\rm for} \quad j=1, \ldots, k \}$$
is the common non-negativity set of elements of $Q = Q_R$. In general a quadratic module is a subset of the polynomial algebra closed under addition and multiplication by sums of squares, see \cite{Prestel-Delzell}.

Given a multisequence $s = (s_{\gamma})_{\gamma \in \NN_0^d}$ and a closed set $K \subseteq \RR^d$, the {\it full $K$-moment problem on $\RR^d$} entails determining whether or not there exists a positive Borel measure $\mu$ on $\RR^d$ such that
\begin{equation}
s_{\gamma} = \int_{\RR^d} x^{\gamma} d\mu(x) \quad \quad {\rm for} \quad \gamma  \in \NN_0^d \label{eq:8Augm1}
\end{equation}
and
\begin{equation}
\label{eq:8Augm2}
\supp \mu \subseteq K.
\end{equation}
If conditions \eqref{eq:8Augm1} and \eqref{eq:8Augm2} are satisfied, then we say that $s$ has a $K$-representing measure.

A multisequence $s = (s_{\gamma})_{\gamma \in \NN_0^d}$ is called {\it positive definite} if
$$L_s(f) \geq 0 \quad \quad {\rm for} \quad f \in \Sigma^2.$$
It is clear that the Riesz-Haviland functional $L_s$ is non-negative on the quadratic module $Q$, whenever the moment problem with $s$ has a $K_Q$-representing measure, where
$$K_Q = \{ x \in \RR^d: r(x) \geq 0 \quad {\rm for}  \quad r \in Q \}.$$ 
Whether the converse is true is one of the central questions of multivariate moment problem theory, see \cite{Prestel-Delzell,Schmuedgen_book} for ample details. In this direction we recall a useful terminology.
A quadratic module $Q$ is said to satisfy the {\it strong moment property} (SMP) if every $Q$-positive functional $L: \RR[x_1, \ldots, x_d] \to \RR$ is a moment functional with the additional requirement that the measure is supported on $K_Q$, i.e., there exists a positive Borel measure $\mu$ supported by $K_Q$ such that
$$L(f) = \int f(x) d\mu(x) \quad\quad {\rm for} \quad f \in \RR[x_1, \ldots, x_d].$$

When dropping the requirement ${\rm supp} (\mu) \subseteq K_Q$ in the (SMP), we simply say that $Q$ possesses the moment property (MP). A quadratic module $Q$ is called {\it archimedean} if there exists a positive constant $C$ with the property $C-|x|^2 \in Q$. In this case $K_Q$ is compact and, by an observation of the second author, the module $Q$ has property (SMP), see again \cite{Prestel-Delzell,Schmuedgen_book} for details.

A classical theorem due to Hamburger (see \cite{Hamburger1,Hamburger3,Hamburger2} and \cite{Schmuedgen_book} for a contemporary treatment) asserts that on the real line every positive definite sequence has the strong moment property. In equivalent terms, the non-negativity of the infinite Hankel matrix
$(s_{k+n})_{k,n=0}^\infty$ is necessary and sufficient for $(s_n)_{n=0}^\infty$ to be the power moment sequence of a positive measure on $\RR$.

Two dimensions are special, notably for allowing to extend similar sufficient positivity conditions for the solvability of the moment problem along codimension-one unbounded varieties, that is real algebraic curves. Note that if $q \in \RR[x_1, x_2]$ is non-zero, than $\cV(q)$ is a curve, or a set of real points. 

%

One step further, we are seeking only reduced principal ideals, that is we enforce that a polynomial $f$ vanishes on $\cV(q)$ if and only if $f \in (q)$. This happens if the factorization of $q$ into irreducible factors is square free and each factor changes sign in $\RR^2$. See \cite{BCR}  for a proof and the natural framework for such a real Nullstellensatz. In this scenario we simply say that $(q)$ is a {\it real ideal}.
The main results of \cite{Scheiderer_curves} and \cite{Plaumann} may be combined to produce the following theorem.

\begin{thm}[\cite{Scheiderer_curves}, \cite{Plaumann}]
\label{thm:SP}
Let $(q)$ be a non-trivial, real principal ideal in $\RR[x_1, x_2]$.  Then
$$(q) + \Sigma^2 = \{ p \in \RR[x_1, x_2]: \text{$p(x) \geq 0$ for all $x \in \cV(q)$}\}$$
if and only if the following conditions hold{\rm :}
\begin{enumerate}
\item[{\rm (i)}] All real singularities of $\cV(q)$ are ordinary multiple points with independent tangents.
\item[{\rm (ii)}] All intersection points of $\cV(q)$ are real.
\item[{\rm (iii)}] All irreducible components of $\cV(q)'$ (i.e., the union of all irreducible components of $\cV(q)$ that do not admit any non-constant bounded polynomial functions) are non-singular and rational.
\item[{\rm (iv)}] The configuration of all irreducible components of $\cV(q)'$ contains no loops.
\end{enumerate}
\end{thm}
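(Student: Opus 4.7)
The plan is to derive this theorem by combining two parallel analyses: a local-global principle for the cone $(q) + \Sigma^2$ due to Scheiderer, and a structural description of the unbounded components of $\cV(q)$ due to Plaumann. I would treat the two implications separately, with the bulk of the work concentrated on sufficiency.

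For necessity, I would produce, for each violated condition, a polynomial $p$ with $p \geq 0$ on $\cV(q)$ but $p \notin (q) + \Sigma^2$. A non-ordinary singular point, or one with coincident tangents, is detected by passing to the completion of the local ring at the singularity, where the image of the sum-of-squares cone fails to cover the local nonnegative cone; this is the classical obstruction from Scheiderer's work. A pair of complex conjugate branches meeting at a non-real intersection gives a similar obstruction through the \'etale local structure of $\cV(q)$. A singular or non-rational unbounded component, or a loop in the component graph of $\cV(q)'$, would be exploited following Plaumann by constructing a ``bump''-type polynomial supported near the bad component whose putative representation forces a global sign inconsistency when traversed along the loop.

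For sufficiency, I would invoke Scheiderer's local-global theorem for a plane curve: a polynomial $p \geq 0$ on $\cV(q)$ lies in $(q) + \Sigma^2$ precisely when its germ at every real point of $\cV(q)$, including points at infinity in a suitable compactification, lies in the local analogue of this sum. Condition (i) guarantees the local condition at ordinary multiple points with independent tangents, essentially by reducing to products of coordinate axes; condition (ii) removes spurious contributions coming from complex intersections. Conditions (iii) and (iv) then handle the unbounded components: Plaumann's theorem says that a loop-free tree of non-singular rational curves without non-constant bounded polynomial functions admits a global parametrization compatible with sums of squares, which lets one clear the obstruction at infinity component by component.

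The principal obstacle is the sufficiency direction for the unbounded part. Two difficulties combine there: sums of squares have uncontrolled growth at infinity, so a decomposition valid modulo $(q)$ in a neighborhood of infinity does not automatically extend to a global polynomial identity; and the local representations already fixed at the singular points must remain compatible with those assembled along the adjacent rational components. The loop-free hypothesis in (iv) is precisely what enables an induction on the number of components of $\cV(q)'$, in which one removes a leaf of the intersection tree and uses the rational parametrization there to rewrite $p$ modulo $(q)$ with one fewer unbounded component. Ensuring that this induction step preserves the local decompositions chosen at the singularities, and that the final output is polynomial rather than merely rational, is the heart of Plaumann's contribution on top of Scheiderer's framework.
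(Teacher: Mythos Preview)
The paper does not contain a proof of this theorem. Theorem~\ref{thm:SP} is stated as a quoted result, attributed in its header to \cite{Scheiderer_curves} and \cite{Plaumann}, and is used only as a black box in the proof of Theorem~\ref{thm:10Septm1}. There is therefore no internal argument to compare your proposal against.

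That said, your sketch is a fair high-level summary of how the cited works proceed: Scheiderer's local-global principle for sums of squares on curves supplies the local criteria corresponding to conditions (i)--(ii), and Plaumann's analysis of the intersection graph of the unbounded components supplies (iii)--(iv), with the tree hypothesis enabling an induction over leaves. If you intend this as a genuine proof rather than a survey, you would still need to make precise the compactification used for the ``points at infinity'' step, the exact form of Scheiderer's local saturation criterion at ordinary multiple points, and the mechanism by which Plaumann's leaf-removal argument produces polynomial (not rational) certificates; as written these are gestured at rather than carried out.
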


In particular, the above result implies that the quadratic module $(q) + \Sigma^2$ has the strong moment property \cite{Scheiderer_curves,Plaumann}. It is also worth mentioning that \cite{Netzer} has discovered finitely generated preorderings in $\RR[x_1, x_2]$ which satisfy the strong moment property. The above result is in sharp contrast to higher dimensional situations, where in general not every positive definite functional along a variety is represented by integration against a positive measure (see, \cite{Scheiderer_surfaces} for details).

\section{Main result}

We consider the union of a curve which satisfies conditions (i)-(iv) in Theorem \ref{thm:SP} with a side (to become clear in an instant) of a truly compact semi-algebraic set with the aim at providing positivity certificates 
for the moment problem to be solvable on that prescribed support.

\begin{thm}
\label{thm:10Septm1}

Let $(q)$ be a non-trivial, real principal ideal of $\RR[x_1, x_2]$ whose zero set satisfies conditions (i)-(iv) in Theorem \ref{thm:SP} and let $Q \subseteq \RR[x_1, x_2]$ be an archimedean quadratic module. Then
the quadratic module $ \Sigma^2 + qQ$ has the strong moment property.
\end{thm}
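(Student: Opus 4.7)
My plan is to split any functional $L \geq 0$ on $\Sigma^2 + qQ$ into a piece supported on the compact semi-algebraic set $K_Q \cap \{q \geq 0\}$, handled by Putinar's archimedean Positivstellensatz, and a piece supported on the curve $\cV(q)$, handled by Theorem \ref{thm:SP}. The splitting is implemented through a GNS construction: polynomials multiplied by $q$ form an invariant subspace on which the multiplication operators are bounded and carry the compact side, while the orthogonal complement carries the curve side.

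Concretely, I would first introduce $L_q(f) := L(qf)$. Because $qr \in qQ \subseteq \Sigma^2 + qQ$ for every $r \in Q$, and $L_q(f^2) = L(qf^2) \geq 0$, the functional $L_q$ is PSD and non-negative on the archimedean $Q$. Putinar's theorem yields a representing measure $\nu$ on $K_Q$, and the inequality $\int qh^2\,d\nu = L((qh)^2) \geq 0$ combined with density of polynomials in $C(K_Q)$ forces $\supp\nu \subseteq K_Q \cap \{q \geq 0\}$. Next, set $\cN := \{f : L(f^2)=0\}$ and form the GNS Hilbert space $H$ as the completion of $V := \RR[x_1,x_2]/\cN$ under $\langle f,g\rangle := L(fg)$; let $M_j$ denote multiplication by $x_j$, put $H_2 := \overline{qV}$ and $H_1 := H_2^\perp$. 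The bound
$$\|M_j qf\|_H^2 = \int x_j^2 qf^2\,d\nu \leq C\,\|qf\|_H^2 \qquad (|x|^2 \leq C \text{ on } K_Q)$$
shows that $M_1, M_2$ restrict to commuting bounded self-adjoint operators on $H_2$, and $\int q^2 f^2\,d\nu \geq 0$ confines their joint spectrum to $K_Q \cap \{q \geq 0\}$.

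Decomposing $e := 1 + \cN = e_1 + e_2$ with $e_k \in H_k$, the multivariate spectral theorem on $H_2$ produces a finite positive Borel measure $\mu_2$ on $K_Q \cap \{q \geq 0\}$ with $\int g\,d\mu_2 = \langle e_2, M_g e_2\rangle$ for every polynomial $g$. Setting $L_1 := L - \int(\cdot)\,d\mu_2$, the identity $M_{qg} e_2 = qg$ in $H_2$ (verified on the dense subset $qV$ via $\langle e_2, M_{qg}(qf)\rangle = L(q^2 gf) = \langle qg, qf\rangle$) yields
$$\int qg\,d\mu_2 = \langle e_2, qg\rangle_H = \langle e, qg\rangle_H = L(qg),$$
so $L_1 \equiv 0$ on the ideal $(q)$. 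Similarly, the relation $M_f e_2 = P_{H_2} f$ combined with Pythagoras gives
$$L_1(f^2) = \|f\|_H^2 - \|P_{H_2} f\|_H^2 = \|P_{H_1} f\|_H^2 \geq 0,$$
so $L_1 \geq 0$ on $\Sigma^2$, hence on $(q) + \Sigma^2$. The SMP of that module (Theorem \ref{thm:SP}) produces a representing measure $\mu_1$ on $\cV(q)$, and $\mu := \mu_1 + \mu_2$ represents $L$ on $\cV(q) \cup (K_Q \cap \{q \geq 0\})$, establishing the strong moment property for $\Sigma^2 + qQ$.

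The main obstacle I anticipate is securing the two identifications $M_f e_2 = P_{H_2} f$ and $M_{qg} e_2 = qg$, which together force $\mu_2$ to agree with the $H_2$-projection norms and $L_1$ to vanish on the ideal $(q)$. Once they are in place, the conclusion follows mechanically from the two SMP inputs used as black boxes, with the geometric hypotheses (i)--(iv) on $q$ entering only through the invocation of Theorem \ref{thm:SP}.
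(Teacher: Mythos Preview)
Your argument is correct and reaches the same splitting $L = L_{\text{compact}} + L_{\text{curve}}$ as the paper, but by a genuinely different mechanism. The paper works measure-theoretically: after obtaining $\nu$ it exploits the discriminant inequality coming from $L((tqg+f)^2)\geq 0$, namely
\[
\Bigl(\int fg\,d\nu\Bigr)^2 \leq \Bigl(\int q g^2\,d\nu\Bigr)\,L(f^2),
\]
passes to $g\in L^2(\nu)$ by density, and uses characteristic functions to show $\nu(\{q\leq 0\})=0$ and, more delicately, that $1/q\in L^1(\nu)$. The compact piece is then the explicit measure $\nu/q$, and $\Lambda(f)=L(f)-\int (f/q)\,d\nu$ is checked directly to kill $(q)$ and to be PSD. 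Your route replaces this analytic estimate by the GNS picture: boundedness of the multiplication operators on $H_2=\overline{qV}$ follows from $\|x_jqf\|^2=\int x_j^2 qf^2\,d\nu$, the spectral theorem manufactures $\mu_2$, and the two identities $M_f e_2 = P_{H_2}f$ and $M_{qg}e_2 = qg$ (both verified by pairing against $qV$, exactly as you outline) replace the $1/q$-integrability step. The paper's argument is more elementary and yields the concrete formula $\mu_2=\nu/q$; yours is more structural and sidesteps the pointwise integrability of $1/q$ entirely, which is arguably the most delicate passage in the paper's proof. Both arguments feed the same two black boxes (archimedean SMP for $Q$, Theorem~\ref{thm:SP} for the curve) at the same places.
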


Before proving Theorem \ref{thm:10Septm1}, we pause to note that the positivity set of $ \Sigma^2 + qQ$ is $\cV(q) \cup [K_Q \cap \{ q > 0\}].$  For instance, taking $q(x_1,x_2) = x_1$ and $Q$ generated by $1-x_1^2-x_2^2$ one finds the positivity set
of the composed quadratic module to be the $x_2$-axis union with the half-disk $\{ (x_1,x_2), x_1 \geq 0, x_1^2 + x_2^2 \leq 1\},$
whence the title of this note.

\begin{figure}[h!]
\includegraphics[width=0.5\textwidth]{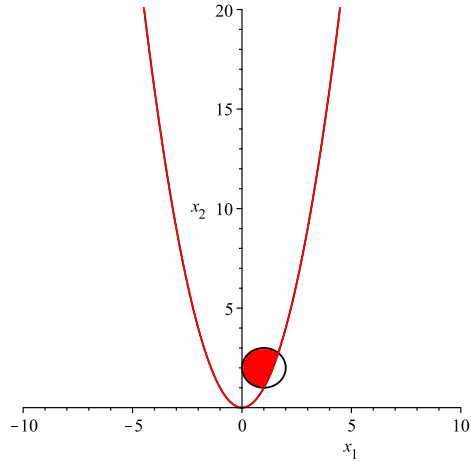}
  \caption{$Q$ is the quadratic module generated by $ \{ 1 - (x_1-1)^2 - (x_2-2)^2 \}$  and $q(x_1,x_2) = x_2 - x_1^2$}
\end{figure}

\begin{figure}[h!]
\includegraphics[width=0.5\textwidth]{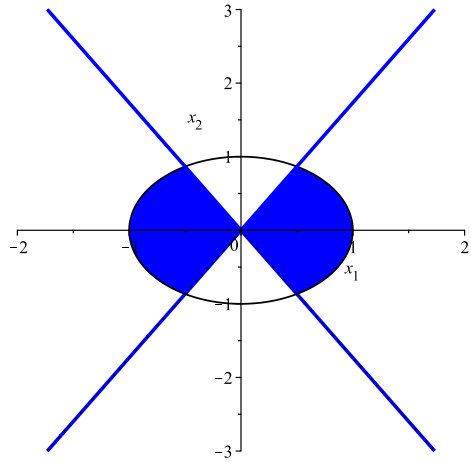}
  \caption{$Q$ is the quadratic module generated by $ \{ 1 - x_1^2 -x_2^2 \}$  and $q(x_1,x_2) = 3x_1^2-x_2^2$}
\end{figure}

\newpage

\begin{proof}[Proof of Theorem \ref{thm:10Septm1}.] We denote in short $x = (x_1,x_2)$.
Let $L \in \RR[x]'$ be a non-trivial linear functional which is non-negative on $ \Sigma^2 + qQ$. We want to prove that $L$ is represented by integration against a positive measure supported by $\cV(q) \cup [K_Q \cap \{ q \geq 0\}].$ Since $L$ is non-zero, the Cauchy-Schwarz inequality
$$ L(f)^2 \leq L(f^2) L(1), \ \ f \in \RR[x],$$
implies $L(1)>0$. Below we will use repeatedly the observation that there are elements $f \in\RR[x]$ with the property $L(f)>0$.

The functional $h \mapsto L(q h)$ is non-negative for $h \in Q$ and $Q$ is an archimedean quadratic module, so there exists a positive Borel measure $\nu$ supported by $K_Q$, such that:
$$ L(qf) = \int_{K_Q} f d\nu, \ \ f \in \RR[x],$$
see \cite{Prestel-Delzell,Schmuedgen_book}.

We claim that the measure $\nu$ does not carry mass on the set $\{ q \leq 0\}$, i.e.,
\begin{equation}
\label{eq:CLAIM}
 \nu ( \{ q \leq 0 \}) = 0.
\end{equation}

The positivity of the functional $L$ on squares yields
$$ L(\{ t q g +f\}^2) \geq 0 \quad t \in \RR \quad {\rm and} \quad f,g \in \RR[x].$$
On the other hand,
$$L(\{ t q g + f \}^2) = t^2 L(q^2 g^2) + 2 t L(q f g) + L(f^2),$$
hence
\begin{equation}
\label{eq:BR}
\left(\int_{K_Q} f(x) g(x) \, d\nu(x) \right)^2 \leq \left( \int_{K_Q} q(x) g(x)^2 \, d\nu(x) \right) \, L(f^2)
\end{equation}
for $f, g \in \RR[x]$. 

The non-negativity set $K_Q$ is compact, therefore continuous functions on $K_Q$ can be uniformly approximated by polynomials. Moreover, continuous functions on $K_Q$
are dense in $L^2(\nu)$. That is 
\begin{equation}
\label{eq:BR2}
\left( \int_{K_Q} f(x) \psi(x) \, d\nu(x) \right)^2 \leq \left( \int_{K_Q} q(x) \psi(x)^2 \, d\nu(x) \right) \, L(f^2),
\end{equation}
where $f \in \RR[x]$ and $\psi \in L^2(\nu)$. If we let $\chi = \mathbbm{1}_{\cV(\cI) \cap K_Q}$ denote the characteristic function of $\cV(\cI) \cap K_Q$ and $\psi = \chi$, then \eqref{eq:BR2} becomes
$$
\left( \int_{K_Q} f(x) \chi(x) \, d\nu(x) \right)^2 \leq \left( \int_{K_Q} q(x) \chi(x) \, d\nu(x) \right) \, L(f^2).
$$
From  $q(x) \chi(x) = 0$ we infer 
$$ \int_{\cV(q) \cap K_Q} f(x) d\nu(x) = 0, \ \ f \in \RR[x].$$

Choosing next $\psi$ to be the characteristic function of a compact subset of the open set $\{ q<0\}$, we find from \eqref{eq:BR2}:
$$ 0 \leq \left( \int_{K_Q} q(x) \chi(x) \, d\nu(x) \right) \, L(f^2). $$
In particular,
$$ 0 \leq \int_{K_Q} q(x) \chi(x) \, d\nu(x) \leq 0,$$
for every characteristic function of a compact subset of $\{ q<0\}$. This proves \eqref{eq:CLAIM}.

Next we choose $\psi$ in \eqref{eq:BR2} of the form $\psi = \frac{f}{q} \phi$, where $\phi = \phi^2$ is the characteristic function of a compact subset of $ \{ q > 0 \}$. We find
$$ \left( \int_{K_Q} f(x) \left(\frac{f(x)}{q(x)} \right) \phi(x) \, d\nu(x) \right)^2 \leq \left( \int_{K_Q} q(x) \left( \frac{f(x)^2}{q(x)^2}  \right) \phi(x) \, d\nu(x) \right) \, L(f^2),$$
or equivalently, since $q > 0$ on the support of $\phi$:
$$ \left( \int_{K_Q} \left( \frac{f(x)^2}{q(x)} \right) \phi(x) \, d\nu(x) \right)^2 \leq \left( \int_{K_Q} \left( \frac{f(x)^2}{q(x)} \right) \phi(x) \, d\nu(x) \right) \, L(f^2).$$
But $\left(\frac{f^2}{q} \right) \phi \geq 0,$ hence
$$ \int_{K_Q} \left(\frac{f(x)^2}{q(x)} \right) \phi(x) \, d\nu(x) \leq L(f^2).$$
A monotonic sequence of such characteristic functions $\phi$ converging point wise to the characteristic function of $\{ q>0\}$ implies $\frac{1}{q} \in L^1(\nu)$. Recall that $L(1)>0$.

Let $\Lambda: \RR[x] \to \RR$ denote the linear functional
\begin{equation}
\label{eq:Lambda}
\Lambda(f) = L(f) - \int_{K_Q} \left(\frac{f(x)}{q(x)} \right) \, d\nu(x), \quad \quad  f \in \RR[x].
\end{equation}
We claim that
\begin{equation}
\label{eq:L1}
\Lambda(q \, f) = 0 \quad\quad {\rm for} \quad f \in \RR[x]
\end{equation}
and
\begin{equation}
\label{eq:L2}
\Lambda(f^2) \geq 0 \quad\quad {\rm for} \quad f \in \RR[x].
\end{equation}
Assertion \eqref{eq:L1} follows immediately from the definition of $\Lambda$. We will now verify \eqref{eq:L2}. 
Given the $\nu$-integrability of $\frac{1}{q}$ we can choose $\psi = f/q$ in \eqref{eq:BR2}. This yields
$$\left( \int_{K_Q} \frac{f(x)^2}{q(x)} \, d\nu(x) \right)^2 \leq \left( \int_{K_Q} \frac{f(x)^2}{q(x)} \, d\nu(x) \right) \, L(f^2)$$
for $f \in \RR[x]$. Since $\frac{f(x)^2}{q(x)} \geq 0 $ on the support of the measure $\nu$, we find
$$
L(f^2) \geq \int_{K_Q} \left( \frac{f(x)^2}{q(x)} \right) \, d\nu(x) \quad\quad {\rm for}  \quad f \in \RR[x]
$$
which is exactly \eqref{eq:L2}. 

Finally, because the ideal $(q)$ is real and its zero set satisfies conditions (i)-(iv) in Theorem \ref{thm:SP} $\Lambda$ has a representing measure supported on $\cV(q)$.
This proves that the integration against the measure $\mu = \frac{\nu}{q} + \sigma$ represents the original functional $L$. Moreover, the support of $\mu$ in contained in the union of the supports of 
$\sigma$ and $\nu$, that is ${\rm supp} \mu \subset \cV(q) \cup [K_Q \cap \{ q > 0\}].$ 
\end{proof}

Given a bisequence $s = (s_{\gamma} )_{\gamma \in \NN_0^2 }$ and $p(x) = \sum_{0 \leq |\lambda| \leq n} p_{\lambda} x^{\lambda} \in \RR[x_1, x_2]$, we shall let $p(E)s$ denote the bisequence given by
$$(p(E)s)(\gamma) := \sum_{0 \leq |\lambda| \leq n } p_{\lambda} s_{\lambda + \gamma} \quad \quad {\rm for} \quad \gamma \in \NN_0^2.$$
\begin{cor}
\label{cor:C1}
Let $s = (s_{\gamma_1, \gamma_2})_{(\gamma_1, \gamma_2) \in \NN_0^2}$ be a positive definite bisequence and let $Q = Q(r_1, \ldots, r_k) \subseteq \RR[x_1, x_2]$ be an archimedean quadratic module. If there exists $q \in \RR[x_1, x_2]$ such that $(q)$ is a non-trivial, real principal ideal of $\RR[x_1, x_2]$ whose zero set satisfies conditions (i)-(iv) of Theorem \ref{thm:SP} and
\begin{equation}
\label{eq:P1}
\text{$q \, r_j(E) s$ is positive  definite for $j = 1 ,\ldots, k$,}
\end{equation}
then $s$ has a representing measure $\mu$ with
$$\supp \mu \subseteq \cV(q) \cup [ K_{Q} \cap \{ q > 0 \} ].$$
\end{cor}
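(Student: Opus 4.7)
The plan is to pass to the dual viewpoint through the Riesz functional and reduce matters to Theorem \ref{thm:10Septm1}. Concretely, I would introduce $L_s \in \RR[x_1, x_2]'$ by $L_s(x^\gamma) := s_\gamma$ and aim to verify that $L_s$ is non-negative on the composed quadratic module $\Sigma^2 + q Q$. Once this is in place, Theorem \ref{thm:10Septm1} yields a positive Borel measure $\mu$ with $\supp \mu \subseteq \cV(q) \cup [K_Q \cap \{q > 0\}]$ representing $L_s$, and evaluating on monomials $x^\gamma$ recovers the desired integral representation of $s$.

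The verification rests on the elementary identity
\[
L_s(p \cdot f) \;=\; L_{p(E)s}(f), \qquad p,\, f \in \RR[x_1, x_2],
\]
which follows by unpacking the definitions of $L_s$ and $p(E)$. A generic element of $\Sigma^2 + qQ$ has the form
\[
\tau \;+\; q\, \sigma_0 \;+\; \sum_{j=1}^{k} q\, r_j\, \sigma_j, \qquad \tau, \sigma_0, \sigma_1, \ldots, \sigma_k \in \Sigma^2,
\]
and I would dispose of the three kinds of summands in turn. The inequality $L_s(\tau) \geq 0$ is the positive definiteness hypothesis on $s$; each $L_s(q\, r_j\, \sigma_j) = L_{qr_j(E)s}(\sigma_j) \geq 0$ is precisely condition \eqref{eq:P1} rewritten via the identity above; and the term $L_s(q\, \sigma_0) = L_{q(E)s}(\sigma_0)$ is handled under the same convention that the trivial generator $r_0 := 1$ is tacitly included in the generating set of the archimedean module $Q$, so that $q(E)s$ is positive definite along with the other shifted sequences.

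The substance already lies in Theorem \ref{thm:10Septm1}, so I do not anticipate a serious obstacle: the corollary is in effect a clean re-expression of that theorem in the language of localized Hankel-type positivity conditions on the bisequence $s$. The only point requiring attention is the bookkeeping just mentioned, namely that the positivity conditions \eqref{eq:P1} be read with $j$ ranging over the full generator list of $Q$, including the trivial one, so that the decomposition of an arbitrary element of $\Sigma^2 + qQ$ is covered summand by summand.
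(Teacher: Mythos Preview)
Your proposal is correct and follows the same route as the paper: introduce the Riesz functional $L_s$, verify that it is non-negative on $\Sigma^2 + qQ$ via the shift identity $L_s(p f) = L_{p(E)s}(f)$, and then invoke Theorem~\ref{thm:10Septm1}. Your careful remark that condition~\eqref{eq:P1} must be read so as to include the trivial generator $r_0 = 1$ (ensuring $q(E)s$ is itself positive definite) is a point the paper's proof leaves implicit.
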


\begin{proof}
Let $L_s: \RR[x_1, x_2] \to \RR$ denote the Riesz-Haviland functional with respect to $s$. Then, since $s$ is positive definite and we have a suitable $q \in \RR[x_1, x_2]$ such that \eqref{eq:P1} is in force, we have
$$L_s(f^2 + q\, g ) \geq 0 \quad\quad {\rm for} \quad f \in \RR[x_1, x_2] \quad {\rm and} \quad g \in Q.$$
Thus, the desired conclusion follows immediately from Theorem \ref{thm:10Septm1}.
\end{proof}

We add a few remarks on Theorem \ref{thm:10Septm1} and its proof.

\begin{rem}
\label{rem:FINAL}
\begin{enumerate}
\item[(a)] If the quadratic module in the statement of the theorem is finitely generated $Q = Q(r_1, \ldots, r_k),$ then the enhanced quadratic module which is shown to carry (SMP)
is $Q(q, qr_1, \ldots, q r_k)$. Notice that the latter may not be archimedean, although $Q$ is.

\item[(b)] Changing the generator of the principal ideal $(q)$ will alter the outcome of the statement, for instance $-q$ instead of $q$ in the enhanced quadratic module will flip the ``bumps" on the other side of the curve
$\cV(q)$.

\item[(c)] In the case that $Q$ is a preordering of $\RR[x_1, x_2]$, then Theorem \ref{thm:10Septm1} follows immediately from Schm\"udgen's fibre theorem \cite{Schmuedgen}. Indeed, we note that, since $K_Q$ is compact, $q$ is a bounded polynomial on $\cV(q) \cup [ K_Q \cap \{ q > 0 \} ]$ and the preordering $Q_\lambda \subseteq \RR[x_1, x_2]$ given by
$$Q_{\lambda} := Q + (q - \lambda) \quad \quad {\rm for} \quad 0 \leq \lambda \leq M,$$
where $(q-\lambda)$ denotes the ideal generated by the polynomial $q(x_1,x_2) - \lambda$ and
$$M = \max_{ (x_1, x_2) \in \cV(q) \cup [ K_Q \cap \{ q > 0 \} ]  } \, q(x_1,x_2),$$
satisfies the (SMP). The preordering $Q_{0}$ satisfies the (SMP) by Theorem \ref{thm:10Septm1} and $Q_{\lambda}$ with $\lambda > 0$ satisfies the (SMP) since $K_{Q_{\lambda}}$ is compact. 

\end{enumerate}
\end{rem}

The statement of Theorem \ref{thm:10Septm1} and Corollary \ref{cor:C1} can be generalized to any number of variables, keeping $(q)$ a real ideal with its zero set hypersurface possessing the (SMP), i.e., the quadratic module $\Sigma^2 + (q) \subseteq \RR[x_1, \ldots, x_d]$ has the strong moment property. This is the case for instance of a
compact zero set $\cV(q)$. Indeed, if $\cV(q)$ is compact, then $\Sigma^2 + (q)$ is a quadratic module with (SMP) \cite{Schmuedgen_book}. We will now provide a statement of the aforementioned generalization of Theorem \ref{thm:10Septm1}.

\begin{cor}
\label{cor:A1}
Let $(q)$ be a non-trivial, real principal ideal of $\RR[x_1, \ldots, x_d]$ such that the quadratic module $\Sigma^2 + (q) \subseteq \RR[x_1, \ldots, x_d]$ has the strong moment property and let $Q \subseteq \RR[x_1, \ldots, x_d]$ be an archimedean quadratic module. Then
the quadratic module $ \Sigma^2 + qQ$ has the strong moment property.
\end{cor}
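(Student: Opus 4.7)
The plan is to mimic the proof of Theorem \ref{thm:10Septm1} essentially verbatim, noting that the only place where the two-dimensional Scheiderer--Plaumann hypothesis was invoked was in the very last step, where we represented the auxiliary functional $\Lambda$ on $\cV(q)$. In the present situation that invocation is replaced directly by the standing hypothesis that $\Sigma^2 + (q)$ has (SMP), so no new ingredients are required.

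More concretely: I would begin with a non-trivial linear functional $L \in \RR[x_1,\ldots,x_d]'$ which is non-negative on $\Sigma^2 + qQ$. Then $h \mapsto L(qh)$ is non-negative on the archimedean quadratic module $Q$, and Putinar's archimedean Positivstellensatz (valid in any number of variables) produces a positive Borel measure $\nu$ supported on the compact set $K_Q$ such that $L(qf) = \int_{K_Q} f\, d\nu$ for all $f \in \RR[x_1,\ldots,x_d]$. Next, expanding the inequality $L((tqg + f)^2) \geq 0$ in $t$ yields the Cauchy--Schwarz type bound $(\int_{K_Q} fg\, d\nu)^2 \leq (\int_{K_Q} q g^2 d\nu)\, L(f^2)$, which, via Stone--Weierstrass on the compact set $K_Q$ and $L^2(\nu)$-density, extends to arbitrary $\psi \in L^2(\nu)$ in place of $g$, exactly as in equation \eqref{eq:BR2}.

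Testing this extended inequality against the characteristic function of a compact subset of $\{q < 0\}$ forces $\nu(\{q<0\}) = 0$, and the vanishing of $q$ on $\cV(q) \cap K_Q$ together with the $\chi$-test gives $\int_{\cV(q)\cap K_Q} f\, d\nu = 0$ for all polynomials $f$; these two facts together yield $\nu(\{q \leq 0\}) = 0$. Plugging in $\psi = (f/q)\phi$ for $\phi$ the characteristic function of a compact subset of $\{q > 0\}$ and taking a monotone limit then shows $1/q \in L^1(\nu)$. One can now define
$$\Lambda(f) := L(f) - \int_{K_Q} \frac{f(x)}{q(x)}\, d\nu(x), \qquad f \in \RR[x_1,\ldots,x_d],$$
which vanishes on $(q)$ by construction and, thanks to the inequality with $\psi = f/q$, satisfies $\Lambda(f^2) \geq 0$. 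In particular $\Lambda$ is non-negative on the quadratic module $\Sigma^2 + (q)$.

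At this point, in place of Theorem \ref{thm:SP}, I invoke the hypothesis that $\Sigma^2 + (q)$ has (SMP) in $\RR[x_1,\ldots,x_d]$: this yields a positive Borel measure $\sigma$ supported on $\cV(q)$ representing $\Lambda$. The measure $\mu = \nu/q + \sigma$ (where $\nu/q$ is understood as the measure with density $1/q$ with respect to $\nu$, concentrated on $K_Q \cap \{q>0\}$) then represents $L$ and has support in $\cV(q) \cup [K_Q \cap \{q > 0\}]$, which is the positivity set of $\Sigma^2 + qQ$, proving (SMP). The main obstacle is really just bookkeeping to confirm that every Cauchy--Schwarz, density, and monotone convergence argument in the proof of Theorem \ref{thm:10Septm1} uses only compactness of $K_Q$ and polynomial density in $C(K_Q)$ — both dimension-independent — so nothing genuinely new has to be proved; the generalization is obtained by decoupling the role of the two-dimensional Scheiderer--Plaumann theorem and treating its conclusion as a black-box assumption.
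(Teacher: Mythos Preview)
Your proposal is correct and follows exactly the approach the paper takes: the paper's own proof simply says to repeat the argument of Theorem~\ref{thm:10Septm1} verbatim, replacing the final appeal to Theorem~\ref{thm:SP} by the assumed (SMP) for $\Sigma^2 + (q)$ in order to represent $\Lambda$ by a measure on $\cV(q)$. Your more detailed walkthrough confirms, as the paper implicitly claims, that every intermediate step (the archimedean representation of $L(q\,\cdot\,)$, the Cauchy--Schwarz bound, the density and monotone-convergence arguments) is dimension-independent.
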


\begin{proof}
The proof can be carried out in much the same way as the proof of Theorem \ref{thm:10Septm1} with the caveat that one will have to rely on the quadratic module $\Sigma^2 + (q)$ satisfying the (SMP) to obtain a measure supported on $\cV(q)$ for the functional $\Lambda$.
\end{proof}

\bibliographystyle{amsplain}
\bibliography{Bib}

\end{document}